\theoremstyle{plain}
\newtheorem{theorem}{Theorem}[section]
\newtheorem*{Theorem B}{Theorem B}
\newtheorem*{Theorem A}{Theorem A}
\newtheorem{proposition}{Proposition}[section]
\newtheorem{corollary}{Corollary}[section]
\newtheorem{example}{Example}[section]
\numberwithin{equation}{section}
\theoremstyle{remark}
 \numberwithin{equation}{section}
\def\<{\left < }
\def\>{\right >}
\def\({\left ( }
\def\){\right )}
\def\e{\eqref}
\def\E4{$\mathbb E^4$}
\def\cn{\hskip.02in{\rm cn}}
\def\sn{\hskip.02in{\rm sn}}
\def\dn{\hskip.02in{\rm dn}}
\def\sd{\hskip.02in{\rm sd}}
\def\cd{\hskip.02in{\rm cd}}
\def\nd{\hskip.02in{\rm nd}}
\def\ns{\hskip.02in{\rm ns}}
\def\ds{\hskip.02in{\rm ds}}
\begin{document}

\markboth{B.-Y. Chen}{Ideal hypersurfaces in $\mathbb E^4$}

\title[Ideal hypersurfaces in $\mathbb E^4$]{Ideal hypersurfaces of Euclidean four-space}

\author[B.-Y. Chen]{Bang-Yen Chen}
 \address{Department of Mathematics\\Michigan State University\\619 Red Cedar Road \\ East Lansing, MI 48824--1027, USA}
\email{bychen@math.msu.edu}

\begin{abstract} The notion of ideal immersions was introduced by the author in 1990s. Roughly speaking, an ideal immersion of a Riemannian manifold into a real space form is a nice isometric immersion which produces the least possible amount of tension from the ambient space at each point. 

In this paper, we classify all ideal hypersurfaces with two distinct principal curvatures in the Euclidean 4-space $\mathbb E^4$. Moreover, we prove that such ideal hypersurfaces are always rigid. Furthermore, we show that non-minimal ideal hypersurfaces with three distinct principal curvatures in $\mathbb E^4$ are also rigid.
On the other hand, we provide explicit examples to illustrate that minimal ideal hypersurfaces with three principal curvatures in $\mathbb E^4$ are not necessary rigid.
\end{abstract}

\keywords{Ideal immersion, ideal hypersurface, $\delta$-invariants, Chen invariants, rigidity, fundamental inequalities.}

 \subjclass[2000]{53C40, 53C42}

\maketitle

\section{Introduction}

For a Riemannian manifold $M$ with $n=\dim M\geq 3$, the author introduced in early 1990s a Riemannian invariant $\delta_M$ defined by \cite{c93}
\begin{align} \label{1.1} \delta_M(p)=\tau (p)-\inf K(p),\end{align}
where $\tau$ is the scalar curvature of $M$ and $\inf K(p)$ is the function assigning to the point $p$ the infimum of the sectional curvature $K(\pi)$, running  over all 2-planes in $T_pM$.

For an isometric immersion of a Riemannian $n$-manifold $M$ into an $m$-dimensional Riemannian space form $R^m(\epsilon)$ of
constant sectional curvature $\epsilon$, the author proved in \cite{c93} the following sharp inequality:
\begin{align}\label{1.2} \delta_M\leq \frac{n^2(n-2)}{2(n-1)} H^2 +\frac{1}{2}(n+1)(n-2)\epsilon,\end{align}
involving the $\delta$-invariant $\delta_M$ and the squared mean curvature $H^2$.

Inequality \e{1.2} has many important applications, for example, it provides a Riemannian obstruction for a Riemannian manifold to admit a minimal
isometric immersion into a Euclidean space. It also gives rise to an obstruction to
Lagrangian isometric immersions from compact Riemannian manifolds with finite fundamental group into complex space forms. The invariant $\delta_M$ and the inequality \e{1.2} were later extended by the author to the general $\delta$-invariants $\delta(n_1,\ldots,n_k)$ (also known as Chen invariants) and general inequalities involving $\delta(n_1,\ldots,n_k)$ (see \cite{c98,c00,c2000,c08,book} for more details).

Since \e{1.2} is a very general and sharp inequality, it is very natural and interesting to investigate submanifolds satisfying the equality case of inequality \e{1.2} identically. Following \cite{c00,book}, we call a submanifold satisfying the equality case of \e{1.2} identically a $\delta(2)$-ideal submanifold. 

In this paper, we classify all ideal hypersurfaces with two distinct principal curvatures in the Euclidean 4-space $\mathbb E^4$. Moreover, we prove that such ideal hypersurfaces in $\mathbb E^4$ are always rigid. Furthermore, we show that non-minimal ideal hypersurfaces with three distinct principal curvatures are also rigid.
On the other hand, we provide explicit examples to show that minimal ideal hypersurfaces with three principal curvatures in $\mathbb E^4$ are not necessary rigid.

\section{Preliminaries}
\subsection{Basic formulas}
Let $M$ be a Riemannian $n$-manifold equipped with an inner product $\<\;\, ,\;\>$. Denote by $\nabla$ the Levi-Civita connection of $M$. 

Assume that $M$ is isometrically immersed in a Euclidean $m$-space $\mathbb E^m$. Then the formulas of Gauss and Weingarten  are given respectively by (cf. \cite{c73,book})
\begin{align}\label{2.1} &\tilde \nabla_XY=\nabla_X Y+h(X,Y),\\&\label{2.2} \tilde
\nabla_X\xi =-A_\xi X+D_X\xi,\end{align} for vector fields $X$ and $Y$ tangent to $N$ and  $\xi$ normal to $N$, where $\tilde \nabla$ denotes the Levi-Civita connection on $\mathbb E^m$, $h$  is the second fundamental form, $D$ is the normal connection, and $A$ is the  shape operator of $N$. 

The second fundamental form $h$ and the shape operator $A$ are related by
\begin{align}\label{2.3}\<A_\xi X,Y\>=\<h(X,Y), \xi\>,\end{align} where $\<\;\, ,\;\>$
is the inner product on $N$ as well as on $\tilde M$. The mean curvature vector of $N$ is defined by
\begin{align}\label{2.4} \overrightarrow{H}=\frac{1}{n} \,{\rm trace} \,h,\;\; n=\dim N.\end{align}
The squared mean curvature $H^{2}$ is given by $H^{2}=\<\right.\hskip-.02in\overrightarrow{H},\overrightarrow{H} \hskip-.02in\left.\>$.

The  {\it equation of Gauss\/} is given by
\begin{equation} \label{2.5} R(X,Y;Z,W)=\<h(X,W),h(Y,Z)\>-\<h(X,Z),h(Y,W)\>\end{equation} for vectors
$X,Y,Z,W$ tangent to $M$, where $R$ denotes the Riemann curvature tensors of $M$.

For the second fundamental form $h$, we define its covariant derivative ${\bar\nabla}h$ with respect to the connection on
$TM \oplus T^{\perp}M$ by
\begin{align}\label{2.6}({\bar\nabla}_{X}h)(Y,Z)=D_{X}(h(Y,Z))-h(\nabla_{X}Y,Z) -h(Y,\nabla_{X}Z).\end{align}

The {\it equation of Codazzi\/} is
\begin{align}\label{2.7}({\bar\nabla}_{X}\sigma)(Y,Z)=
({\bar\nabla}_{Y}\sigma)(X,Z),\end{align} for vectors $X,Y,Z$ tangent to $M$.

\subsection{$\delta$-invariants}
 Let $M$ be a  Riemannian $n$-manifold. Let $K(\pi)$ denote the sectional curvature of $M$ associated with a plane section $\pi\subset T_pM$, $p\in M$. For a given orthonormal basis $e_1,\ldots,e_n$ of the tangent space $T_pM$, the scalar curvature $\tau$ at $p$ is defined to be $$\tau(p)=\sum_{i<j} K(e_i\wedge e_j).\notag$$

Let $L$ be a subspace of $T_pM$  of dimension $r\geq 2$  and let $\{e_1,\ldots,e_r\}$ be an orthonormal basis of $L$. We define the scalar curvature $\tau(L)$ of $L$ by  $$\tau(L)=\sum_{\alpha<\beta} K(e_\alpha\wedge e_\beta),\quad 1\leq
\alpha,\beta\leq r.$$

Given an integer $k\geq 1$, we denote by ${\mathcal S}(n,k)$ the finite set  consisting of unordered $k$-tuples $(n_1,\ldots,n_k)$ of integers $\geq 2$ satisfying  $n_1< n$ and $n_1+\cdots+n_k\leq n$. We put ${\mathcal S}(n)=\cup_{k\geq 1}{\mathcal S}(n,k)$. 

 For each $k$-tuple $(n_1,\ldots,n_k)\in {\mathcal S}(n)$, the author introduced the $\delta$-invariant $\delta{(n_1,\ldots,n_k)}$ as (cf. \cite{c98,c00,book})
\begin{align}\notag \delta(n_1,\ldots,n_k)(p)=\tau(p)-\inf\{\tau(L_1)+\cdots+\tau(L_k)\}, \end{align}
where $L_1,\ldots,L_k$ run over all $k$ mutually orthogonal subspaces of $T_pM$ such that  $\dim L_j=n_j,\, j=1,\ldots,k$. 

The $\delta$-curvatures are very different in nature from the ``classical'' scalar and Ricci curvatures; simply due to the fact that both scalar and Ricci curvatures are the ``total sum'' of sectional curvatures on a Riemannian manifold. In contrast, the $\delta$-curvature invariants  are obtained  from the scalar curvature by throwing away a certain amount of sectional curvatures. 
(For the history and motivation on $\delta$-invariants, see author's most recent survey article \cite{c13}.)

\subsection{Fundamental inequalities}
The author proved the following fundamental inequalities in \cite{c98,c00}.

\begin{Theorem A} Let $M^n$ be an $n$-dimensional submanifold in a real space form $R^m(\epsilon)$ of constant  curvature $\epsilon$. Then, for each $k$-tuple  $(n_1,\ldots,n_k)\in\mathcal S(n)$, we have
\begin{align}\label{2.8} \delta{(n_1,\ldots,n_k)} \leq  {{n^2(n+k-1-\sum n_j)}\over{2(n+k-\sum n_j)}}H^2 +{1\over2} \Big({{n(n-1)}}-\sum_{j=1}^k {{n_j(n_j-1)}}\Big)\epsilon.\end{align}

The equality case of inequality \eqref{2.8}  holds at a point $p\in M$ if and only if, there exists an orthonormal basis  $\{e_1,\ldots,e_m\}$ at $p$, such that  the shape operators of $M$ in $R^m(\epsilon)$ at $p$  with respect to $\{e_1,\ldots,e_m\}$  take the form:
\begin{align} \font\b=cmr10 scaled \magstep2
\def\bigzerol{\smash{\hbox{ 0}}}
\def\bigzerou{\smash{\lower.0ex\hbox{\b 0}}} A_r=\left( \begin{matrix} A^r_{1} & \hdots & 0
\\ \vdots  & \ddots& \vdots &\bigzerou \\ 0 &\hdots &A^r_k&
\\ \\&\bigzerou & &\mu_rI \end{matrix} \right),\quad  r=n+1,\ldots,m,
\label{2.9}\end{align}
where $I$ is an identity matrix and $A^r_j$ is a symmetric $n_j\times n_j$  submatrix satisfying
$$\hbox{\rm trace}\,(A^r_1)=\cdots=\hbox{\rm
trace}\,(A^r_k)=\mu_r.$$\end{Theorem A}

In particular, for hypersurfaces in a Euclidean 4-space, Theorem A implies the following.

\begin{theorem} \label{T:2.1} Let $M$ be an $3$-dimensional  submanifold of a Riemannian $4$-manifold $R^4(\epsilon)$ of constant sectional curvature $\epsilon$. Then 
\begin{align}\label{2.10}\delta_M \leq\frac{9}{4}  H^2 +2\epsilon.\end{align}

Equality case of \e{2.10} hold if and only if, with respect to suitable orthonormal frame $\{e_1,e_2,e_3,e_4\}$, the shape operator $A= A_{e_4}$ of $M$ in $R^4(\epsilon)$ take the following form:
\begin{align}\label{2.11}A=\left( \begin{matrix} \lambda & 0 & 0  \\ 0 &\mu & 0  \\ 0 & 0 &\lambda+\mu \end{matrix}\right)\end{align}
for some functions $\lambda$ and $\mu$.
\end{theorem}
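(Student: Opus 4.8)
The plan is to deduce Theorem~\ref{T:2.1} directly from Theorem~A by specializing the parameters. For a hypersurface $M^3 \subset R^4(\epsilon)$ we have $n = 3$, $m = 4$, and the only relevant tuple in $\mathcal S(3)$ is the single pair $(n_1) = (2)$ with $k = 1$, since $\delta_M = \delta(2)$ by definition~\e{1.1}. Plugging $n=3$, $k=1$, $n_1 = 2$ into the coefficient $\frac{n^2(n+k-1-\sum n_j)}{2(n+k-\sum n_j)}$ gives $\frac{9 \cdot 1}{2 \cdot 2} = \frac94$, and the additive term $\frac12\big(n(n-1) - n_1(n_1-1)\big)\epsilon = \frac12(6 - 2)\epsilon = 2\epsilon$; this yields inequality~\e{2.10} immediately.

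For the equality case, I would invoke the normal form~\e{2.9} from Theorem~A. Since $m - n = 1$, there is a single normal direction $e_4$ (so $r$ ranges over the single value $n+1 = 4$), and the block structure of $A_4$ consists of one $2\times 2$ symmetric block $A^4_1$ together with the scalar block $\mu_4 I$, which here is just the $1\times 1$ entry $\mu_4$. The trace condition reads $\operatorname{trace}(A^4_1) = \mu_4$. The next step is to further diagonalize: since $A^4_1$ is a symmetric $2\times 2$ matrix, we may rotate $e_1, e_2$ within their span to diagonalize it, writing its eigenvalues as $\lambda$ and $\mu$. This rotation preserves the block form and the trace condition, so after relabeling we get $\mu_4 = \lambda + \mu$, and the shape operator $A = A_{e_4}$ takes exactly the form~\e{2.11}. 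Conversely, any shape operator of the form~\e{2.11} manifestly satisfies the structure~\e{2.9} with $A^4_1 = \operatorname{diag}(\lambda,\mu)$ and $\mu_4 = \lambda + \mu$, so by Theorem~A equality in~\e{2.10} holds.

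There is essentially no hard part here: the statement is a pure specialization, and the only thing requiring a remark is the harmless extra diagonalization of the single $2\times 2$ block by an orthogonal change of $\{e_1, e_2\}$, which is legitimate because it leaves the ``suitable orthonormal frame'' clause of Theorem~A intact. If anything needs care, it is checking that $\mathcal S(3)$ contains no other tuple that could give a sharper bound — but the constraints $n_1 \geq 2$, $n_1 < n = 3$, and $n_1 + \cdots + n_k \leq 3$ force $k = 1$ and $n_1 = 2$, so $\delta(2)$ is the only $\delta$-invariant available in dimension three and $\delta_M = \delta(2)$ by~\e{1.1}. Hence Theorem~\ref{T:2.1} follows.
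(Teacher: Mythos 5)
Your proposal is correct and follows exactly the route the paper intends: the paper simply asserts that Theorem~A implies Theorem~\ref{T:2.1}, and your specialization to $n=3$, $k=1$, $n_1=2$ (giving the coefficients $\tfrac94$ and $2\epsilon$) together with the diagonalization of the single $2\times2$ block $A^4_1$ is precisely the computation being left implicit. The only point you add beyond the paper is the explicit check that $\delta_M=\delta(2)$ is the unique $\delta$-invariant in dimension three, which is a harmless and correct remark.
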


 A submanifold of a Euclidean space is called {\it $\delta(n_{1},\ldots,n_{k})$-ideal} if it satisfies the equality case of \e{2.8} identically. Roughly speaking, an ideal immersion is a very nice immersion which produces the least possible amount of tension from the ambient space.  Such submanifolds have many interesting properties and have been studied by many geometers during the last two decades (see \cite{c08,book} for details).

Since the invariant $\delta_M$ defined in \e{1.1} is the only non-trivial $\delta$-invariant for Riemannian 3-manifolds,  an isometric immersion of a 3-manifold $M$ is ideal if and only if it is $\delta(2)$-ideal, i.e., it satisfied the equality case of \e{2.10} identically.

\section{ Brief reviews of Jacobi's elliptic functions}

We review briefly some known facts on Jacobi's elliptic functions for later use (for details,
see, for instance, \cite{B}).

Put
\begin{align}\label{3.1} &u =\int_0^x \frac{dt}{\sqrt{(1-t^2)(1-k^2t^2)}},
\\&\label{3.2} K=\int_0^1\frac{dt}{\sqrt{(1-t^2)(1-k^2t^2)}},\end{align} where
we first suppose that $x$ and $k$ satisfy $0<k<1$ and $-1\leq x\leq 1$. 

Equation \e{3.1} defines $u$ as an odd function of $x$ which is
positive, increasing from $0$ to $K$ as $x$ increases from $0$ to
1. Inversely, the same equation defines $x$ as an odd function of
$u$ which increases from 0 to 1 as $u$ increase from 0 to $K$;
this function  is known as a Jacobi's elliptic function, denoted
by sn$(u,k)$  (or simply by sn$(u)$), so that we can put
\begin{align}\label{3.3}u=\hbox{sn}^{-1}(x),\quad x=\hbox{sn}(u).\end{align}
 The other two main Jacobi's
functions sn$(u,k)$ and dn$(u,k)$ (or simply denoted
respectively by sn$(u)$ and dn$(u)$) are defined by
\begin{align}\label{3.4}\hbox{cn}(u)=\sqrt{1-\hbox{sn}^2(u)},\quad 
\hbox{dn}(u)=\sqrt{1-k^2\hbox{sn}^2(u)},\end{align} the square
roots are positive so long as $u$ is confined to $-K<u<K$, so that cn$(u)$ and dn$(u)$ are even functions of $u$.
Let $k'=\sqrt{1-k^2}$ be the complementary modulus. Then
$\dn(u)\geq k'>0$.
The Jacobi's elliptic functions depend on the variable $u$ as well
as on the parameter $k$, which is called the {\it modulus}. 

It is well-known that the Jacobi's elliptic functions satisfy
the following identities:
\begin{equation} \begin{aligned}\label{3.5}&\sn^2(u)+\cn^2(u)=1,\quad \dn^2(u)+k^2\sn^2(u)=1,\\&k^2\cn^2(u)+{k'}^2=\dn^2(u),\quad\cn^2(u)+{k'}^2\sn^2(u)=\dn^2(u).\end{aligned}\end{equation}

It is also known that  the Jacobi's
elliptic functions satisfy
\begin{equation} \begin{aligned}\label{3.6} 
&\frac{d}{du}\sn(u)=\cn(u)\dn(u),\;\;\frac{d}{du}\cn(u)=-\sn(u)\dn(u),\\ 
&\frac{d}{du}\dn(u)=-k^2\sn(u)\cn(u).
\end{aligned}\end{equation}

Using $\cn(u),\, dn(u)$ and $\sn(u)$, one may define minor Jacobi elliptic functions as follows:
\begin{align} \cd(u)=\frac{\cn(u)}{\dn(u)},\; \sd(u)=\frac{\sn(u)}{\dn(u)},\; \ns(u)=\frac{1}{\sn(u)},\cdots,etc.\end{align}

\section{Ideal hypersurfaces with two distinct principal curvatures}

In this section, we completely classify all ideal hypersurfaces with two distinct principal curvatures in $\mathbb E^4$.

\begin{theorem}\label{T:4.1} Let $M$ be an ideal hypersurface of the Euclidean 4-space $\mathbb E^4$. Then $M$ has two distinct principal curvatures at each point if and only if $M$ is congruent to one of the following hypersurfaces:
\begin{enumerate}
\item[{\rm (a)}] A spherical cylinder given by
\begin{align} \big(t, a \sin u,a\cos u\sin v, a\cos u\cos v\big)\end{align}
for some positive number $a$;
\item[{\rm (b)}] A cone given by
\begin{align}   \Big(\sqrt{1-a^2} t, at \sin u,at\cos u\sin v, at\cos u\cos v\Big)\end{align}
for some real number $a$ satisfying $0\leq a\leq 1$;
\
\item[{\rm (c)}] A hypersurface given by
\begin{equation}\begin{aligned} &\Bigg(\frac{1}{a} \sd \big(at,\tfrac{1}{\sqrt{2}}\big) \sin u,\frac{1}{a}\sd \big(at,\tfrac{1}{\sqrt{2}}\big)\cos u\sin v, \frac{1}{a}\sd \big(at,\tfrac{1}{\sqrt{2}}\big)\cos u\cos v,
\\&\hskip1in \frac{1}{2}
  \int_0^t \sd^2\big(at,\tfrac{1}{\sqrt{2}}\big)dt\Bigg)\end{aligned}\end{equation}
for some positive real number $a$.\end{enumerate}
\end{theorem}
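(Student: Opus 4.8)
The plan is to set up the structure equations for a hypersurface $M$ in $\mathbb E^4$ with exactly two distinct principal curvatures, and exploit the ideal condition \eqref{2.11}. First I would invoke Theorem \ref{T:2.1}: at each point the shape operator has eigenvalues $\lambda,\mu,\lambda+\mu$ with respect to an orthonormal frame $\{e_1,e_2,e_3\}$, the third eigenvalue being forced to be the sum of the other two. If $M$ has only two distinct principal curvatures, the multiplicities must distribute among $\{\lambda,\mu,\lambda+\mu\}$ in one of a few ways; a short case analysis shows the only genuinely possible configurations (up to relabeling) are $\mu=0$ (giving principal curvatures $\lambda,\lambda,0$, so one curvature is zero with multiplicity one and the other has multiplicity two — wait, $\lambda$ appears twice since $\lambda+\mu=\lambda$) or $\lambda=\mu$ (giving $\lambda,\lambda,2\lambda$), or $\lambda+\mu$ coincides with $\lambda$ forcing $\mu=0$ again. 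So effectively two families: (i) principal curvatures $0$ (multiplicity $1$) and $\lambda$ (multiplicity $2$), and (ii) principal curvatures $\lambda$ (multiplicity $2$) and $2\lambda$ (multiplicity $1$). This accounts for cases (a)/(b) versus case (c).

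Next I would write down the Codazzi equations \eqref{2.7} in the principal frame. For a hypersurface with a principal curvature of multiplicity $\geq 2$, the distributions are integrable and one gets ODEs governing how the principal curvatures vary. The standard approach: let $e_1$ span the simple eigendirection, $e_2,e_3$ span the $2$-dimensional eigenspace. Codazzi gives that $\lambda$ (the repeated curvature) is constant along the $e_2,e_3$ directions and satisfies a first-order ODE along the $e_1$-flow, while the connection coefficients $\langle\nabla_{e_2}e_1,e_2\rangle$ etc. are determined by $\lambda$ and the simple curvature. In case (i), $e_1$ is the flat ($0$-curvature) direction; the integral curves of $e_1$ are geodesics of $M$ that are straight lines in $\mathbb E^4$, and the leaves of $\{e_2,e_3\}^\perp$... rather the leaves of $\mathrm{span}\{e_2,e_3\}$ are totally umbilical round $2$-spheres (if $\lambda$ is a nonzero constant on them). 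One then reconstructs $M$ as a ``cylinder-like'' object: either a genuine cylinder over a sphere (case (a), when the lines are parallel, i.e. $\lambda$ constant) or a cone over a sphere (case (b), when the lines pass through a common point, i.e. $\lambda$ varies like $1/t$). Distinguishing these two comes from integrating the ODE for $\lambda$ along $e_1$: the Codazzi/Gauss system forces $\lambda' = \pm\lambda^2$ (constant, or $1/(t+c)$) — the two integration outcomes give exactly (a) and (b).

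For case (ii) — principal curvatures $\lambda,\lambda,2\lambda$ — the simple direction $e_1$ has curvature $2\lambda$, and again the $2$-eigenspace integrates to umbilical $2$-spheres of radius $1/\lambda$. The new feature is that along $e_1$ the function $\lambda$ obeys a genuinely nonlinear second-order-type relation coming from combining the Codazzi equation (which gives $e_1\lambda$ in terms of the connection coefficient $\varphi := \langle \nabla_{e_2}e_1, e_2\rangle$) with the Gauss equation for the plane $e_1\wedge e_2$ (which relates $e_1\varphi$, $\varphi^2$, $\lambda^2$). Eliminating $\varphi$ should yield an ODE of the form $\lambda'' = f(\lambda,\lambda')$ whose first integral is an energy relation, and after the substitution $\lambda = a/\sd(at,\tfrac1{\sqrt2})$ (equivalently writing things in terms of the profile curve $r(t) = 1/\lambda$, which satisfies an elliptic-function ODE) one recognizes the Jacobi $\sd$-function with modulus $1/\sqrt2$; the auxiliary height coordinate is then $\tfrac12\int \sd^2$. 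I would verify that the resulting profile curve, revolved appropriately (the ``rotational-like'' hypersurface whose profile is this curve and whose fibers are round $2$-spheres), indeed has shape operator \eqref{2.11}, closing the ``if'' direction. Conversely, a direct computation of the second fundamental form of each of (a), (b), (c) checks they are ideal with two principal curvatures.

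The main obstacle I anticipate is the integration in case (ii): setting up the correct ODE for the profile function and recognizing its solution as the specific Jacobi elliptic function $\sd(\cdot,\tfrac1{\sqrt2})$ rather than a generic elliptic function. This requires carefully tracking the Gauss and Codazzi equations to see that the modulus is pinned to $k=1/\sqrt2$ (this rigidity of the modulus is really where the ``ideal'' constraint $\lambda+\mu$ for the third curvature does its work), and then massaging the first integral into the standard form \eqref{3.6} satisfies. The cylinder/cone dichotomy in cases (a)–(b) is comparatively routine once the frame is fixed.
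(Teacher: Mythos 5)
Your proposal follows essentially the same route as the paper: the same case split ($\lambda=\mu$ versus one of $\lambda,\mu$ vanishing, the latter two being symmetric), Codazzi yielding an integrable spherical foliation by the repeated eigenspace (the paper makes this precise via Hiepko's warped-product theorem, writing $M$ locally as ${\bf R}\times_f S^2(1)$), and Gauss then giving $f''=0$ in the flat case (cylinder versus cone) and $\lambda''+2\lambda^3=0$ in the case $\lambda=\mu$, whose solution is the Jacobi function $\sd(\,\cdot\,,1/\sqrt2)$, after which the immersion is integrated explicitly. One small detail to correct: in case (c) the repeated principal curvature and the warping function are both proportional to $\sd\big(at,\tfrac{1}{\sqrt2}\big)$ itself (indeed $f\propto\lambda$ since $f'/f=(\ln\lambda)'$), not to its reciprocal as your parenthetical $\lambda=a/\sd$ and ``radius $1/\lambda$'' suggest.
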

\begin{proof} Assume that $M$ is an ideal hypersurface of the Euclidean 4-space. Then Theorem \ref{T:2.1} implies that there exists an orthonormal frame $\{e_1,e_2,e_3,e_4\}$ such that the shape operator of $M$ with respect to this frame takes the following simple form:
\begin{align}\label{4.4}A=\left( \begin{matrix} \lambda & 0 & 0  \\ 0 &\mu & 0  \\ 0 & 0 &\lambda+\mu \end{matrix}\right)\end{align}
for some functions $\lambda$ and $\mu$.

Let $\omega_i^j$ be the connection forms defined by
\begin{align}\label{4.5} \nabla_X e_i=\sum_{j=1}^3 \omega_i^j(X)e_j,\;\; i=1,2,3.\end{align}
Then we have $\omega_i^j=-\omega_j^i$ for $i,j=1,2,3$. In particular, we have $\omega_i^i=0$.

Now, let us assume that $M$ has two distinct principal curvatures at each point. Then one of the following three cases must occurs: 
(i) $\lambda=\mu$, 
(ii) $\lambda=0$, or 
(iii) $\mu=0$.

\vskip.05in
{\it Case} (i): $\lambda=\mu$: In this case, the second fundamental form satisfies
\begin{equation} \begin{aligned} \label{4.6}&h(e_1,e_1)=h(e_2,e_2)=\lambda e_4,\;\\&h(e_3,e_3)=2\lambda e_4,\\& h(e_i,e_j)=0,\;\; otherwise.\end{aligned}\end{equation}

By straight-forward computation, we find  the following equations from \e{4.5}, \e{4.6} and the equation of Codazzi.
\begin{align} &\label{4.7} e_1\lambda=e_2\lambda =0, \;\; e_3\lambda=\lambda \omega_3^1(e_1)=\lambda\omega_3^2(e_2),
\\& \label{4.8}\omega_3^1(e_3)=\omega_3^2(e_3)=0,
\\&\label{4.9} \omega_2^3(e_1)=\omega_1^3(e_2)=0.
\end{align}

Let $\mathcal D$ denote the distribution spanned by $e_1$ and $e_2$. It follows from \e{4.9} that the distribution $\mathcal D$ is an integrable distribution. Moreover, we know from 
\e{4.7} and \e{4.9} that every leave of $\mathcal D$ is a totally umbilical surface in $M$ with constant mean curvature. Thus $\mathcal D$ is a spherical distribution. Furthermore, it follows from \e{4.8} that the integral curves of $e_3$ are geodesic in $N$. Therefore, the distribution spanned by $e_3$ is a totally geodesic distribution. 

Let $N$ be a leave of $\mathcal D$. Since $N$ is totally umbilical in $M$,  \e{4.6} implies that $N$ is also a totally umbilical surface in $\mathbb E^4$. Therefore $N$ is an open portion of 2-sphere. Hence we may apply a result of Hiepko to conclude that $M$ is locally a warped product ${\bf R}\times_f S^2(1)$ of a real line and the unit 2-sphere $S^2(1)$ with a warping function $f$ on ${\bf R}$ (cf. \cite{H} or \cite[page 90]{book}). Consequently, we may assume that the metric tensor of $M$ is given by
\begin{align}\label{4.10} g=dt^2 +f^2(t)(du^2+(\cos^2 u) dv^2)\end{align} 

Obviously, $e_3$ is tangent to the first factor and $e_1,e_2$ are tangent to the second factor of the warped product.  Thus we may assume that
\begin{align} \label{4.11} e_1=\frac{1}{f}\frac{\partial}{\partial u},\;\;  e_2=\frac{\sec u}{f}\frac{\partial}{\partial v},\;\; e_3=\frac{\partial}{\partial t}.\end{align}

By combining \e{4.7} and \e{4.11} we see that $\lambda=\lambda(t)$. Thus we find from \e{4.7} that
\begin{align}\label{4.12} \omega_3^1(e_1)=\omega_3^2(e_2)=(\ln \lambda)' .\end{align}
From \e{4.8}, \e{4.9} and \e{4.12} we obtain
\begin{align} \label{4.13} \nabla_{e_1}e_3=\frac{\lambda'}{\lambda} e_1,\;\; \nabla_{e_2}e_3=\frac{\lambda'}{\lambda} e_2,\;\; \nabla_{e_3}e_3=0,\end{align}
which implies that the curvature tensor $R$ of $M$ satisfies
\begin{align}\label{4.14} \<R(e_1,e_3)e_3,e_1\>=-(\ln\lambda)'' -(\ln \lambda')^2.\end{align}

On the other hand, we find from \e{4.6} and the equation of Gauss that
\begin{align}\label{4.15} \<R(e_1,e_3)e_3,e_1\>=2\lambda^2.\end{align}
So, after combining \e{4.14} and \e{4.15}, we obtain the following differential equation:
\begin{align}\label{4.16} \lambda''+2\lambda^3=0.\end{align}
By solving this second order non-linear differential equation, we get $$\lambda(t)=\frac{a}{2}\sd\Big(at+b,\frac{1}{\sqrt{2}}\Big)$$ for some positive number $a$ and a real number $b$. Therefore, after applying a suitable translation in $t$, we have
\begin{align} \label{4.17} \lambda(t)=\frac{a}{2}\sd\Big(at,\text{\small$\frac{1}{\sqrt{2}}$}\Big).\end{align}
Now, by using \e{4.6}, \e{4.11} and \e{4.17} we derive that
\begin{equation} \begin{aligned} \label{4.18}&h\(\frac{\partial }{\partial u},\frac{\partial }{\partial u}\)=\frac{a}{2}f^2 \sd\Big(at,\text{\small$\frac{1}{\sqrt{2}}$}\Big) e_4,
\\&h\(\frac{\partial }{\partial v},\frac{\partial }{\partial v}\)=\frac{a}{2}f^2\cos^2 u \sd\Big(at,\text{\small$\frac{1}{\sqrt{2}}$}\Big) e_4,
\\&h\(\frac{\partial }{\partial t},\frac{\partial }{\partial t}\)=a \sd\Big(at,\text{\small$\frac{1}{\sqrt{2}}$}\Big) e_4,
\\& h\(\frac{\partial }{\partial t},\frac{\partial }{\partial u}\)=h\(\frac{\partial }{\partial t},\frac{\partial }{\partial v}\)=g\(\frac{\partial }{\partial u},\frac{\partial }{\partial v}\)=0.\end{aligned}\end{equation}
Moreover, after a straight-forward long computation, we know from \e{4.10} that the Levi-Civita connection of $M$ satisfies
\begin{equation}\begin{aligned}\label{4.19}
 &\nabla_{\frac{\partial}{\partial t}}\frac{\partial}{\partial t}= 0,
\;\; \nabla_{\frac{\partial}{\partial t}}\frac{\partial}{\partial u}=\frac{ f'} {f}\frac{\partial }{\partial u},\;\; \nabla_{\frac{\partial}{\partial t}}\frac{\partial}{\partial v}=\frac{ f'} {f}\frac{\partial }{\partial v},
\\& \nabla_{\frac{\partial}{\partial u}}\frac{\partial}{\partial u}=-f f'\frac{\partial}{\partial t},
\;\;  \nabla_{\frac{\partial}{\partial u}}\frac{\partial}{\partial v}=- \tan u \frac{\partial }{\partial v} ,
\\& \nabla_{\frac{\partial}{\partial v}}\frac{\partial}{\partial v}=-f f' \cos^2 u \frac{\partial}{\partial t}+\sin u\cos u\frac{\partial}{\partial u}. \end{aligned}\end{equation} 

Now, by applying \e{4.18}, \e{4.19} and the following equation
$$(\bar \nabla_{\frac{\partial}{\partial t}}h)\(\frac{\partial}{\partial u},\frac{\partial}{\partial u}\)= (\bar \nabla_{\frac{\partial}{\partial u}}h)\(\frac{\partial}{\partial t},\frac{\partial}{\partial u}\)$$
of Codazzi, we find 
\begin{align}\label{4.20} \frac{f'}{f}=a \cd\Big(at,\text{\small$\frac{1}{\sqrt{2}}$}\Big)  \ns\Big(at,\text{\small$\frac{1}{\sqrt{2}}$}\Big). \end{align}
After solving this differential equation, we get
\begin{align}\label{4.21} f(t)=c\sd \Big(at,\text{\small$\frac{1}{\sqrt{2}}$}\Big) \end{align}
for some nonzero constant $c$. 

By applying \e{4.6}, \e{4.17}, \e{4.19}, we see that the sectional curvature $K(\frac{\partial}{\partial u}\wedge \frac{\partial}{\partial v}) $ of the plane section spanned by $\frac{\partial}{\partial u}$ and $\frac{\partial}{\partial v}$ satisfies
\begin{align}\label{4.22} \lambda^2=K(\tfrac{\partial}{\partial t}\wedge \tfrac{\partial}{\partial u})=\frac{1-f'{}^2}{f^2}.\end{align}
Now, by substituting \e{4.17} and \e{4.21} into \e{4.22} we find $c^2=a^{-2}$. Thus, without of generality, we may put $c=a^{-1}$. Consequently, we have
\begin{align}\label{4.23}  f(t)=\frac{1}{a}\sd \Big(at,\text{\small$\frac{1}{\sqrt{2}}$}\Big). \end{align}
By combining this with \e{4.10} we obtain
\begin{align}\label{4.24} g=dt^2 +\frac{\sd^2 \big(at,\text{\small$\frac{1}{\sqrt{2}}$}\big)}{a^2}(du^2+\cos^2 u\, dv^2), \end{align} 
which implies that
\begin{equation}\begin{aligned}\label{4.25}
 &\nabla_{\frac{\partial}{\partial t}}\frac{\partial}{\partial t}= 0,
\;\; \\& \nabla_{\frac{\partial}{\partial t}}\frac{\partial}{\partial u}=a \cd\Big(at,\text{\small$\frac{1}{\sqrt{2}}$}\Big)  \ns\Big(at,\text{\small$\frac{1}{\sqrt{2}}$}\Big)\frac{\partial }{\partial u},\;\; \\& \nabla_{\frac{\partial}{\partial t}}\frac{\partial}{\partial v}=a \cd\Big(at,\text{\small$\frac{1}{\sqrt{2}}$}\Big)  \ns\Big(at,\text{\small$\frac{1}{\sqrt{2}}$}\Big)\frac{\partial }{\partial v},
\\& \nabla_{\frac{\partial}{\partial u}}\frac{\partial}{\partial u}=- \frac{1}{a} \cd\Big(at,\text{\small$\frac{1}{\sqrt{2}}$}\Big)\sd\Big(at,\text{\small$\frac{1}{\sqrt{2}}$}\Big)\nd\Big(at,\text{\small$\frac{1}{\sqrt{2}}$}\Big)\frac{\partial}{\partial t},
\;\;  \\& \nabla_{\frac{\partial}{\partial u}}\frac{\partial}{\partial v}=- \tan u \frac{\partial }{\partial v} ,
\\& \nabla_{\frac{\partial}{\partial v}}\frac{\partial}{\partial v}= - \frac{1}{a} \cd\Big(at,\text{\small$\frac{1}{\sqrt{2}}$}\Big)\sd\Big(at,\text{\small$\frac{1}{\sqrt{2}}$}\Big)\nd\Big(at,\text{\small$\frac{1}{\sqrt{2}}$}\Big) \cos^2 u \frac{\partial}{\partial t}\\&\hskip.8in +\sin u\cos u\frac{\partial}{\partial u}.  \end{aligned}\end{equation} 
Moreover, it follows from \e{4.6}, \e{4.11} and \e{4.17} that
\begin{equation} \begin{aligned} \label{4.26}&h\(\frac{\partial }{\partial u},\frac{\partial }{\partial u}\)=\frac{1}{2a} \sd^3\Big(at,\text{\small$\frac{1}{\sqrt{2}}$}\Big) e_4,
\\&h\(\frac{\partial }{\partial v},\frac{\partial }{\partial v}\)=\frac{1}{2a}\cos^2 u \sd^3\Big(at,\text{\small$\frac{1}{\sqrt{2}}$}\Big) e_4,
\\&h\(\frac{\partial }{\partial t},\frac{\partial }{\partial t}\)=a \sd\Big(at,\text{\small$\frac{1}{\sqrt{2}}$}\Big) e_4,
\\& h\(\frac{\partial }{\partial t},\frac{\partial }{\partial u}\)=h\(\frac{\partial }{\partial t},\frac{\partial }{\partial v}\)=g\(\frac{\partial }{\partial u},\frac{\partial }{\partial v}\)=0.\end{aligned}\end{equation}
Therefore, by using the formula of Gauss, \e{4.25} and \e{4.26}, we may conclude that the immersion $L: M\to \mathbb E^4$ of the ideal hypersurface satisfies
\begin{align}\label{4.27}
 &\frac{\partial^2 {L}}{\partial t^2}= a \sd\Big(at,\text{\small$\frac{1}{\sqrt{2}}$}\Big) e_4,
 \\&\label{4.28}\frac{\partial^2 L}{\partial t \partial u}=a \cd \Big(at,\text{\small$\frac{1}{\sqrt{2}}$}\Big)  \ns\Big(at,\text{\small$\frac{1}{\sqrt{2}}$}\Big)\frac{\partial L}{\partial u},
\\&\label{4.29} \frac{\partial^2 L}{\partial t \partial v}=a \cd\Big(at,\text{\small$\frac{1}{\sqrt{2}}$}\Big)  \ns\Big(at,\text{\small$\frac{1}{\sqrt{2}}$}\Big) \frac{\partial L}{\partial v},
\\&\label{4.30}  \frac{\partial^2 L}{\partial u \partial u}=- \frac{1}{a} \cd\Big(at,\!\text{\small$\frac{1}{\sqrt{2}}$}\Big)\sd\Big(at,\!\text{\small$\frac{1}{\sqrt{2}}$}\Big)\nd\Big(at,\!\text{\small$\frac{1}{\sqrt{2}}$}\Big)\frac{\partial L}{\partial t}\\&\notag\hskip.8in +\frac{1}{2a} \sd^3\Big(at,\!\text{\small$\frac{1}{\sqrt{2}}$}\Big) e_4,
\\& \label{4.31}  \frac{\partial^2 L}{\partial u \partial v} =- \tan u \frac{\partial L}{\partial v} ,
\\& \label{4.32}  \frac{\partial^2 L}{\partial v \partial v}= - \frac{1}{a} \cd\Big(at,\text{\small$\frac{1}{\sqrt{2}}$}\Big)\sd\Big(at,\text{\small$\frac{1}{\sqrt{2}}$}\Big)\nd\Big(at,\text{\small$\frac{1}{\sqrt{2}}$}\Big) \cos^2 u \frac{\partial L}{\partial t}
\\&\notag\hskip.6in +\sin u\cos u\frac{\partial L}{\partial u}+\frac{1}{2a}\cos^2 u \sd^3\Big(at,\text{\small$\frac{1}{\sqrt{2}}$}\Big) e_4.  \end{align} 

After solving \e{4.31} we get
\begin{align}\label{4.33} L(t,u,v)=A(t,v)\cos u+B(t,u)\end{align}
for some vector-valued functions $A(t,v)$ and $B(t,u)$. Now, by substituting \e{4.33} into \e{4.29} we find 
\begin{equation}\begin{aligned}\label{4.34}
 &\frac{\partial^2 A}{\partial t\partial v}= a \cd\Big(at,\text{\small$\frac{1}{\sqrt{2}}$}\Big)\ns\Big(at,\text{\small$\frac{1}{\sqrt{2}}$}\Big) \frac{\partial A}{\partial v},
 \end{aligned}\end{equation}
which implies  \begin{equation}\begin{aligned}\label{4.35}
 &A(t,v)= P(t) +Q(v)\sd\Big(at,\text{\small$\frac{1}{\sqrt{2}}$}\Big)
 \end{aligned}\end{equation}
for some vector functions $P,Q$. Combining \e{4.35} with \e{4.33} gives
\begin{align}\label{4.36} L(t,u,v)=(\cos u)\Big(P(t) +Q(v)\sd\Big(at,\text{\small$\frac{1}{\sqrt{2}}$}\Big)\Big)+B(t,u).\end{align}

Also, after substituting \e{4.36} into \e{4.28} we obtain
\begin{align}\label{4.37} &\sn\Big(at,\text{\small$\frac{1}{\sqrt{2}}$}\Big)P'(t)=a\cd\Big(at,\text{\small$\frac{1}{\sqrt{2}}$}\Big)P(t),
\\& \label{4.38} \sn\Big(at,\text{\small$\frac{1}{\sqrt{2}}$}\Big)\frac{\partial^2 B}{\partial t\partial u}=a\cd\Big(at,\text{\small$\frac{1}{\sqrt{2}}$}\Big)\frac{\partial B}{\partial u}.
\end{align}
By solving the differential equations \e{4.37} and \e{4.38} we find
\begin{align}\label{4.39} & P(t) =c_0\sd\Big(at,\text{\small$\frac{1}{\sqrt{2}}$}\Big),
\\& \label{4.40} B(t,u)=R(u)\sd\Big(at,\text{\small$\frac{1}{\sqrt{2}}$}\Big)+S(t),
 \end{align} for some vector $c_0$ and vector functions $R(u),S(t)$.
After combining \e{4.39} and \e{4.40} with \e{4.36} we get
\begin{align}\label{4.41} L(t,u,v)=S(t)+(R(u) + T(v)\cos u)\sd\Big(at,\text{\small$\frac{1}{\sqrt{2}}$}\Big),\end{align}
where $T(v)=c_0+Q(v)$. Now, by substituting \e{4.41} into \e{4.27} we get
\begin{equation}\label{4.42} e_4=\frac{1}{a} S''(t)\ds\Big(at,\text{\small$\frac{1}{\sqrt{2}}$}\Big)-a(R(u)+T(v)\cos u)\sd^2\Big(at,\text{\small$\frac{1}{\sqrt{2}}$}\Big).
\end{equation}
So, after substituting \e{4.41} and \e{4.42} into \e{4.30}, we obtain
\begin{equation}\begin{aligned}\label{4.43}
 & \hskip.5in 2a^2(R''(u)+R(u))\dn^4\Big(at,\text{\small$\frac{1}{\sqrt{2}}$}\Big)
 \\&=\dn^2\Big(at,\text{\small$\frac{1}{\sqrt{2}}$}\Big)\Big(S''(t)\dn\Big(at,\text{\small$\frac{1}{\sqrt{2}}$}\Big)\sn\Big(at,\text{\small$\frac{1}{\sqrt{2}}$}\Big)-2a S'(t)\cn\Big(at,\text{\small$\frac{1}{\sqrt{2}}$}\Big)\Big). \end{aligned}\end{equation}
It follows from \e{4.43} that 
\begin{align}\label{4.44} &  R''(u)+R(u)=d_1  \end{align}
for some vector $d_1$.
 By solving \e{4.44} we get $$R(u)=d_1+d_2 \cos u+c_1 \sin u$$ for some vectors $d_2,c_1$. Combining this with \e{4.41} yields
\begin{align}\label{4.45} L(t,u,v)=G(t)+(c_1 \sin u+ H(v)\cos u)\sd\Big(at,\text{\small$\frac{1}{\sqrt{2}}$}\Big)\end{align} with $G(t)=S(t)+d_1 \sd\Big(at,\text{\small$\frac{1}{\sqrt{2}}$}\Big)$ and $H(v)=d_2+T(v)$.

Substituting \e{4.45} into \e{4.27} gives
\begin{equation}\label{4.46} e_4=\frac{1}{a} G''(t)\ds\Big(at,\text{\small$\frac{1}{\sqrt{2}}$}\Big)-a(c_1 \sin u+H(v)\cos u)\sd^2\Big(at,\text{\small$\frac{1}{\sqrt{2}}$}\Big).\end{equation}
Finally, by substituting \e{4.45} and \e{4.46} into \e{4.30} and \e{4.32}, we obtain after long computation that
\begin{align}\notag L=(c_1 \sin u+ (c_2 \cos v+c_3\sin v))\cos u)\sd\Big(at,\text{\small$\frac{1}{\sqrt{2}}$}\Big)+c_4\int_0^t \sd^2\Big(as,\text{\small$\frac{1}{\sqrt{2}}$}\Big)ds\end{align}
for some vectors $c_1,\ldots,c_4\in \mathbb E^4$.
Consequently,  by choosing a suitable coordinate system of $\mathbb E^4$, we obtain case (c) of the theorem.

\vskip.05in
{\it Case} (ii): $\lambda=0$.  In this case, the second fundamental form satisfies
\begin{equation} \begin{aligned} \label{4.47}&h\(e_2,e_2\)=\mu e_4,
\;\; h(e_3,e_3)=\mu e_4,\\& h(e_i,e_j)=0,\;\; otherwise.\end{aligned}\end{equation}
From \e{4.5}, \e{4.47} and Codazzi's equation we obtain
\begin{align} &\label{4.48} e_2\mu=e_3\mu =0, \;\; e_1\mu=\mu \omega^1_2(e_2)=\mu\omega^1_3(e_3),
\\& \label{4.49}\omega_2^1(e_3)=\omega^1_3(e_2)=0,
\\&\label{4.50} \omega_1^2(e_1)=\omega_1^3(e_1)=0.
\end{align}

Let $\mathcal H$ be the distribution spanned by $e_2$ and $e_3$. It follows from \e{4.48}-\e{4.50} that  $\mathcal H$ is an integrable distribution whose leaves are totally umbilical in $M$ with constant mean curvature. Thus, $\mathcal H$ is a spherical distribution. Also, it follows from \e{4.50} that the integral curves of $e_1$ are geodesic in $N$.  Therefore, Hiepko's theorem in \cite{H} implies that $M$ is locally a warped product ${\bf R}\times_f S^2(1)$ of a real line and a unit 2-sphere $S^2(1)$. Consequently, we may assume that the metric tensor of $M$ is given by
\begin{align}\label{4.51} g=dt^2 +f^2(t)(du^2+\cos^2 u\, dv^2).\end{align} 
Obviously, $e_1$ is tangent to the first factor and $e_2,e_3$ are tangent to the second factor of the warped product.  Thus we have
\begin{align} \label{4.52} e_1=\frac{\partial}{\partial t},\;\; e_2=\frac{1}{f}\frac{\partial}{\partial u},\;\;  e_2=\frac{\sec u}{f}\frac{\partial}{\partial v}.\end{align}
From \e{4.51} we conclude that the Levi-Civita connection $\nabla$ of $M$ satisfies \e{4.18}. Moreover, \e{4.48} shows that $\mu=\mu(t)$.

It follows from \e{4.18} that the sectional curvature $K(\pi)$ of the plane section $\pi$ spanned by $\frac{\partial}{\partial t}, \frac{\partial}{\partial u}$ is equal to $-f''/f$. On the other hand, it follows from \e{4.47} and Gauss' equation that $K(\pi)=0$. Therefore we get $f''=0$, which implies that $f=at+b$ for some real numbers $a,b$, not both zero.  

If $a\ne 0$, then after applying a suitable translation in $t$ we have $f=a t$. Consequently,  either ($\alpha$) $f=b$ with $b\ne 0$ or ($\beta$) $f=at$ with $a\ne 0$.

\vskip.05in
{\it Case} (ii.$\alpha$): $f=b, \, b\ne 0$. In this case, \e{4.51} becomes
\begin{align}\label{4.53} g=dt^2 +b^2(du^2+\cos^2 u\, dv^2).\end{align} 
Thus $M$ is an open portion of the Riemannian product of a line and a 2-sphere $S^2(b)$ with radius $b$.
Hence, in view of \e{4.47}, we conclude that the immersion $L:M\subset {\bf R}\times S^2(\frac{1}{b})\to \mathbb E^4$ is the product immersion of a line and an ordinary 2-sphere $S^2(\frac{1}{b})$ in $\mathbb E^3$. Clearly, in this case the second fundamental form of $M$ in $\mathbb E^4$ depends only the metric tensor of $M$.

\vskip.05in
{\it Case} (ii.$\beta$): $f=at$. In this case, \e{4.51} becomes
\begin{align}\label{4.54} g=dt^2 +a^2 t^2(du^2+\cos^2 u\, dv^2).\end{align}
Without loss of generality, we may assume that $a$ is positive. Thus the Levi-Civita connection of $g$ satisfies
\begin{equation}\begin{aligned}\label{4.55}
 &\nabla_{\frac{\partial}{\partial t}}\frac{\partial}{\partial t}= 0,
\;\; \nabla_{\frac{\partial}{\partial t}}\frac{\partial}{\partial u}=\frac{ 1} {t}\frac{\partial }{\partial u},\;\; \nabla_{\frac{\partial}{\partial t}}\frac{\partial}{\partial v}=\frac{1} {t}\frac{\partial }{\partial v},
\\& \nabla_{\frac{\partial}{\partial u}}\frac{\partial}{\partial u}=-a^2t\frac{\partial}{\partial t},
\;\;  \nabla_{\frac{\partial}{\partial u}}\frac{\partial}{\partial v}=- \tan u \frac{\partial }{\partial v} ,
\\& \nabla_{\frac{\partial}{\partial v}}\frac{\partial}{\partial v}=-a^2t \cos^2 u \frac{\partial}{\partial t}+\sin u\cos u\frac{\partial}{\partial u}. \end{aligned}\end{equation} 
It follows from \e{4.55} that the sectional curvature $K(\hat\pi)$ of the plane section $\hat\pi$ spanned by $
\frac{\partial }{\partial u},\frac{\partial }{\partial v}$ is equal to $(1-a^2)/(a^2t^2)$. 

On the other hand, the equation of Gauss gives $K(\hat\pi)=\mu^2$. Therefore, we may put
\begin{align}\label{4.56} \mu=\frac{\sqrt{1-a^2}}{at}\end{align}
for some positive number $0<a<1$.
Consequently,  \e{4.47} becomes
\begin{equation} \begin{aligned} \label{4.57}&h\(\frac{\partial }{\partial t},\frac{\partial }{\partial t}\)=0,
\; h\(\frac{\partial }{\partial u},\frac{\partial }{\partial u}\)=a\sqrt{1-a^2}t e_4,
\;\\&h\(\frac{\partial }{\partial v},\frac{\partial }{\partial v}\)=a\sqrt{1-a^2}t\cos^2 u e_4,
\\& h\(\frac{\partial }{\partial t},\frac{\partial }{\partial u}\)=h\(\frac{\partial }{\partial t},\frac{\partial }{\partial v}\)=h\(\frac{\partial }{\partial u},\frac{\partial }{\partial v}\)=0.\end{aligned}\end{equation}
 
 Gauss' formula, \e{4.55} and \e{4.57} imply that the immersion $L: M\to \mathbb E^4$ of the ideal hypersurface satisfies
\begin{align}\label{4.58}
 &\frac{\partial^2 {L}}{\partial t^2}= 0,
\;\;\frac{\partial^2 L}{\partial t \partial u}=\frac{1}{t}\frac{\partial L}{\partial u},
\;\;  \frac{\partial^2 L}{\partial t \partial v}=\frac{1}{t} \frac{\partial L}{\partial v},
\\&\label{4.59}  \frac{\partial^2 L}{\partial u \partial u}=-a^2 t\frac{\partial L}{\partial t} +a\sqrt{1-a^2}t e_4,
\\& \label{4.60}  \frac{\partial^2 L}{\partial u \partial v} =- \tan u \frac{\partial L}{\partial v} ,
\\& \label{4.61}  \frac{\partial^2 L}{\partial v \partial v}=-a^2 t\cos^2 u \frac{\partial L}{\partial t} +\sin u\cos u\frac{\partial L}{\partial u}+a\sqrt{1-a^2}t\cos^2 u e_4.  \end{align} 
Moreover,  \e{4.54}, \e{4.56}, and Weingarten's formula imply
\begin{equation}\begin{aligned}\label{4.62} &\frac{\partial e_4}{\partial t}=0,
\;\;\frac{\partial e_4 }{\partial u}=-\frac{\sqrt{1-a^2}}{at} \frac{\partial L}{\partial u},
\;\; \frac{\partial e_4 }{\partial v}=-\frac{\sqrt{1-a^2}}{at} \frac{\partial L}{\partial v}. \end{aligned}\end{equation}

Solving \e{4.58} gives
\begin{align}\label{4.63} L(t,u,v)=t A(u,v)\end{align}
for some vector function $A(u,v)$. So, after substituting \e{4.63} into \e{4.60} we find
$\frac{\partial^2 A}{\partial u \partial v} =- \tan u \frac{\partial A}{\partial v}$, which implies that
$$A(u,v)=P(u)+Q(v)\cos u$$ for some vector functions $P(u), Q(v)$. Combining this with \e{4.63}  gives
\begin{align}\label{4.64} L(t,u,v)=t (P(u)+Q(v)\cos u).\end{align}
Now, by substituting \e{4.64} into \e{4.60} and \e{4.61}, we find
\begin{align}\label{4.65} &(\cos u) P''(u)+(\sin u) P'(u)=-c_0,
\\&\label{4.66} Q''(v)+Q(v)=-c_0,\end{align}
for some vector  $c_0\in \mathbb E^4$. After solving \e{4.65} and \e{4.66} we get
\begin{align}\label{4.67} &P(u)=c_0 \cos u+c_2\sin u+c_1,
\\&\label{4.68} Q(v)=c_3 \cos v +c_4 \sin v-c_0,\end{align}
for some vectors $c_1,c_2,c_3,c_4$. Now, by combining \e{4.64}, \e{4.67} and \e{4.68}, we obtain
\begin{align}\label{4.69} L(t,u,v)=t (c_1+c_2 \sin u+(c_3 \cos v+c_4\sin v)\cos u).\end{align}
Consequently, by applying \e{4.54}, we obtain case (b) of the theorem after choosing a suitable coordinate system of $\mathbb E^4$.

\vskip.05in
{\it Case} (iii): $\mu=0$. This case reduces to case (ii).

The converse can be verified by straight-forward computation.
\end{proof}

Recall that an isometric immersion of a Riemannian $n$-manifold into a Euclidean $m$-space is called {\it rigid} if the isometric immersion is unique up to isometries of $\mathbb E^m$. 

For ideal hypersurfaces with two distinct principal curvatures in $\mathbb E^4$, we have the following rigidity theorem.

\begin{theorem} \label{T:4.2} Every ideal hypersurface with two distinct principal curvatures  in $\mathbb E^4$ is rigid.
\end{theorem}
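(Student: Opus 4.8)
The plan is to combine the classification in Theorem~\ref{T:4.1} with the rigidity principle for hypersurfaces. Since the normal bundle of a hypersurface of $\mathbb E^{4}$ has flat normal connection, the fundamental theorem of submanifolds (Bonnet) tells us that an isometric immersion into $\mathbb E^{4}$ is rigid once its shape operator is determined, up to an overall sign, by the induced metric. So it suffices to prove, for each of the hypersurfaces (a), (b), (c) in Theorem~\ref{T:4.1}, that \emph{every} isometric immersion of the underlying Riemannian manifold into $\mathbb E^{4}$ has shape operator of the form \eqref{2.11} with the same eigenvalues up to sign. I would split the argument according to the type number (the rank of the shape operator), which is $3$ for case (c) and $2$ for cases (a) and (b).

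\emph{Type number $3$.} In case (c) the shape operator is ${\rm diag}(\lambda,\lambda,2\lambda)$ with $\lambda=\tfrac a2\sd(at,\tfrac1{\sqrt2})$; on the natural domain $\sd(at,\tfrac1{\sqrt2})>0$, so $\lambda$ is nowhere zero and the type number is $3$ at every point. The classical Beez--Killing rigidity theorem --- a connected hypersurface of $\mathbb E^{n+1}$ whose type number is everywhere $\geq 3$ is determined, up to the sign of its second fundamental form, by its induced metric (see, e.g., Kobayashi--Nomizu, \emph{Foundations of Differential Geometry}, Vol.~II, Ch.~VII) --- then applies directly and gives the rigidity.

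\emph{Type number $2$.} In cases (a) and (b) the shape operator is ${\rm diag}(0,\mu,\mu)$ and Beez--Killing no longer applies, so a more geometric argument is needed. From the connection of $M$ --- the product structure in case (a), the formulas \eqref{4.55} in case (b) --- one sees that the unit field $e_1=\partial/\partial t$ spans the kernel of the curvature tensor, $R(e_1,\,\cdot\,)=0$ (an intrinsically defined distribution), and that the integral curves of $e_1$ are geodesics of $M$. Let $\tilde f$ be any isometric immersion of this Riemannian manifold into $\mathbb E^{4}$, with shape operator $A$. The Gauss equation \eqref{2.5} gives $\langle Ae_1,W\rangle\langle AY,Z\rangle=\langle Ae_1,Z\rangle\langle AY,W\rangle$ for all $Y,Z,W$, so $Ae_1$ is proportional to $AY$ for every $Y$; hence $Ae_1\neq 0$ would force ${\rm rank}\,A\leq 1$ and therefore $R\equiv 0$, which is impossible because the $(\partial/\partial u)\wedge(\partial/\partial v)$ planes have positive curvature. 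Thus $Ae_1=0$, i.e.\ the $e_1$-geodesics are relative nullity lines and $\tilde f$ sends each of them to a Euclidean line segment. A short computation with the derivative of $\tilde f_{*}(\partial/\partial t)$ (via Gauss' formula \eqref{2.1} and $Ae_1=0$) then shows: in case (a) these rulings are mutually parallel, so $\tilde f(M)$ is a cylinder over a surface $\Sigma$ contained in an affine hyperplane $\mathbb E^{3}$, with $\Sigma$ an isometric immersion of the complete round sphere $S^{2}(a)$ --- by Cohn--Vossen's rigidity theorem $\Sigma$ is the standard sphere, so $\tilde f$ is congruent to the model (a); in case (b) the rulings all pass through a common apex (the $t$-derivative of $\tilde f_{*}(\partial/\partial t)$ vanishes while the slice metric collapses as $t\to 0$), so $\tilde f(M)$ is a cone over a surface $W(N)$ contained in a unit $3$-sphere $S^{3}(1)\subset\mathbb E^{4}$, with $W$ an isometric immersion of the round sphere $S^{2}(a)$, $0<a<1$ --- by the rigidity of convex surfaces in $S^{3}$ (the spherical Weyl problem) $W$ is the standard geodesic $2$-sphere, so $\tilde f$ is congruent to the model (b). In all cases the shape operator is forced up to sign, which is the desired rigidity.

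\emph{Where the difficulty lies.} The only nonformal part is the type-number-$2$ cases. For a general immersion the intrinsic geometry yields merely $Ae_1=0$ and $\det(A|_{e_1^{\perp}})=K>0$; what pins the shape operator down is the identification of $\tilde f(M)$ as a cylinder or a cone followed by a \emph{global} rigidity statement for the cross-section --- Cohn--Vossen in case (a), and its constant-curvature analogue in $S^{3}$ in case (b), the latter being the more serious input. One must also use that the cross-section is a \emph{complete} round $2$-sphere, which holds on the natural maximal domains of the models of Theorem~\ref{T:4.1} but fails for arbitrary local pieces, since constant-positive-curvature surfaces are not locally rigid. The passage from relative nullity to a cylinder or cone, and the Gauss-equation computation giving $Ae_1=0$, are routine once the set-up is in place.
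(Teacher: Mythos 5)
Your route is genuinely different from the paper's, and in the type--number--$2$ branch it has a real gap. The paper's proof is essentially two lines: in the course of classifying the three cases, the proof of Theorem~\ref{T:4.1} expresses the principal curvatures intrinsically --- $\lambda^2$ equals the sectional curvature $K(\tfrac{\partial}{\partial t}\wedge\tfrac{\partial}{\partial u})$ in \eqref{4.15} and \eqref{4.22}, and $\mu^2=K(\hat\pi)$ in case (ii) --- so the shape operator of \emph{any ideal immersion} of the given Riemannian $3$-manifold is determined up to sign by the metric, and the uniqueness part of the fundamental theorem of submanifolds (Bonnet) gives congruence. In other words, the paper proves rigidity \emph{within the class of ideal immersions}, a purely local statement. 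Your Beez--Killing argument in case (c) is correct and even stronger (it rules out all isometric competitors, not just ideal ones), but in cases (a) and (b) you are attempting that stronger statement, and there the argument cannot be closed: after correctly deducing $Ae_1=0$ and the cylinder/cone structure, you must invoke Cohn--Vossen (resp.\ the spherical Weyl problem) for the cross-section, and these are \emph{global} theorems requiring the cross-section to be a complete round sphere. An ideal hypersurface here is in general only a local piece (the classification is local), and for such pieces the stronger statement is actually false: bend a coordinate patch of $S^2(a)$ isometrically in $\mathbb E^3$ to a non-round constant-curvature surface $\Sigma'$ and take $\mathbb R\times\Sigma'\subset\mathbb E^4$; this is an isometric, non-congruent (and non-ideal, since its shape operator is ${\rm diag}(0,\kappa_1,\kappa_2)$ with $\kappa_1\neq\kappa_2$) immersion of the same local Riemannian manifold as the spherical cylinder. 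You flag this completeness issue yourself but do not resolve it, and it cannot be resolved along your lines.

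The repair is to prove what the paper proves: fix the competitor to be an ideal immersion, so that by Theorem~\ref{T:2.1} its shape operator has the form \eqref{2.11}; your Gauss-equation computation (or the paper's \eqref{4.15}, \eqref{4.22} and $K(\hat\pi)=\mu^2$) then pins down $\lambda$ and $\mu$ up to an overall sign from intrinsic curvatures, and Bonnet's theorem yields congruence without any global input. With that reformulation your type-$3$ branch can be kept as a genuine strengthening for case (c), but the type-$2$ branch should be replaced by the intrinsic-determination argument rather than by cylinder/cone recognition plus global sphere rigidity.
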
 
\begin{proof} From the proof of Theorem \ref{T:4.1}, we know that the second fundamental form of each ideal hypersurface  in $\mathbb E^4$ with two distinct principal curvatures depends only on the metric tensor of the ideal hypersurface. Consequently, the fundamental theorem of submanifolds implies that the ideal immersion is rigid (cf. \cite{c93,book,K}).
\end{proof}

\section{Rigidity and non-rigidity of ideal hypersurfaces with three distinct principal curvatures}

First, we give the following rigidity result.

\begin{proposition} \label{P:5.1} Every non-minimal ideal hypersurface  in $\mathbb E^4$ with  three distinct principal curvatures is rigid. \end{proposition}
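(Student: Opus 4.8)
The plan is to show that a non-minimal $\delta(2)$-ideal hypersurface $M^3\subset\mathbb E^4$ with three distinct principal curvatures has a second fundamental form that is determined algebraically by the intrinsic metric, so that rigidity follows from the fundamental theorem of submanifolds exactly as in Theorem~\ref{T:4.2}. By Theorem~\ref{T:2.1}, with respect to the ideal orthonormal frame $\{e_1,e_2,e_3,e_4\}$ the shape operator is $A=\mathrm{diag}(\lambda,\mu,\lambda+\mu)$; having three distinct principal curvatures means $\lambda\neq\mu$, $\lambda\neq 0$ and $\mu\neq 0$ everywhere (on a dense open set, hence everywhere by continuity of the argument). First I would write out the Gauss equation for this shape operator: the sectional curvatures of the three coordinate $2$-planes are $K_{12}=\lambda\mu$, $K_{13}=\lambda(\lambda+\mu)$, $K_{23}=\mu(\lambda+\mu)$. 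These three intrinsic quantities are symmetric functions of $\lambda$ and $\mu$; I would solve them for $\lambda,\mu$. For instance $K_{13}-K_{23}=(\lambda-\mu)(\lambda+\mu)=\lambda^2-\mu^2$, while $K_{12}=\lambda\mu$, and one also has $K_{13}+K_{23}=(\lambda+\mu)^2$ and $K_{13}+K_{23}-2K_{12}=\lambda^2+\mu^2-2\lambda\mu=(\lambda-\mu)^2$. Hence $(\lambda+\mu)^2$, $(\lambda-\mu)^2$ and $\lambda\mu$ are all intrinsic, which determines $\{\lambda,\mu\}$ up to the simultaneous sign ambiguity $(\lambda,\mu)\mapsto(-\lambda,-\mu)$ (this is just the overall choice of unit normal $e_4\mapsto -e_4$) and the labeling ambiguity $\lambda\leftrightarrow\mu$ (which permutes $e_1,e_2$ and leaves $A$'s conjugacy class unchanged). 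The key point for rigidity is that once the metric is fixed, the eigenvalues and eigendistributions of $A$ are intrinsically pinned down: the eigendistributions are recovered as the curvature eigendistributions, since in a principal frame the three coordinate planes are the ones realizing the three distinct products, and non-minimality together with three distinct principal curvatures guarantees these planes are intrinsically distinguished (the Ricci operator $\mathrm{Ric}=\mathrm{diag}(\lambda\mu+\lambda(\lambda+\mu),\,\lambda\mu+\mu(\lambda+\mu),\,\lambda(\lambda+\mu)+\mu(\lambda+\mu))$ has three distinct eigenvalues precisely under these hypotheses, after checking that no two of its entries coincide, which forces $\lambda=\mu$ or $\lambda=0$ or $\mu=0$). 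So the eigendirections of $A$ are the eigendirections of $\mathrm{Ric}$, hence intrinsic.

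The second step is to rule out the residual sign ambiguity, which is the substantive part. A priori, knowing $\{\lambda^2,\mu^2,\lambda\mu\}$ leaves open whether $\mathrm{Ric}$ is realized by $(\lambda,\mu)$ or by $(-\lambda,-\mu)$; but these two give shape operators $A$ and $-A$, which correspond to the two choices of unit normal and hence to congruent immersions (one is the image of the other under the reflection of $\mathbb E^4$ fixing $M$'s tangent space and reversing the normal, or simply under a global isometry of $\mathbb E^4$). So this ambiguity is harmless for rigidity. What genuinely must be excluded is the possibility that the \emph{metric} could be compatible, via Gauss and Codazzi, with some \emph{other} shape operator not of the diagonal form above — but that cannot happen here, because we are only asked about immersions of a manifold that \emph{is} ideal with three distinct principal curvatures, and for such an immersion Theorem~\ref{T:2.1} forces the ideal form, and then the intrinsic data $\mathrm{Ric}$ (or the coordinate-plane sectional curvatures) determines $A$ up to sign as above. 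Hence: given the metric, the second fundamental form is determined up to the choice of $e_4\mapsto\pm e_4$, i.e.\ up to an isometry of $\mathbb E^4$.

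The final step is to invoke the fundamental theorem of submanifolds (Bonnet-type rigidity, as cited via \cite{c93,book,K}): two isometric immersions of $M$ into $\mathbb E^4$ with the same first and second fundamental forms differ by an isometry of $\mathbb E^4$. Since we have just shown the second fundamental form is a function of the metric alone (up to the harmless normal-orientation choice), any two ideal immersions of $M$ with three distinct principal curvatures are congruent, which is the assertion.

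I expect the main obstacle to be verifying cleanly that the Ricci operator has three \emph{distinct} eigenvalues under the hypotheses "non-minimal and three distinct principal curvatures," so that the eigendistributions of $A$ are intrinsically recoverable: one must check that $\lambda\mu+\lambda(\lambda+\mu)=\lambda\mu+\mu(\lambda+\mu)$ forces $\lambda=\mu$, that $\lambda\mu+\lambda(\lambda+\mu)=\lambda(\lambda+\mu)+\mu(\lambda+\mu)$ forces $\mu=0$ (using $\lambda+\mu\ne 0$, i.e.\ non-minimality in the relevant direction, or more carefully $\lambda\ne -\mu$), and similarly for the third pair. If some of these coincidences were not excluded by the stated hypotheses one would have to work harder — e.g.\ use the Codazzi equations to propagate the principal directions — but under "three distinct principal curvatures $\lambda,\mu,\lambda+\mu$" together with non-minimality ($\lambda+\mu\ne 0$ somewhere, and by the structure one checks everywhere) all three Ricci eigenvalues are distinct, and the argument closes. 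The remaining computations (solving the three quadratic relations for $\lambda,\mu$, matching eigendistributions) are routine.
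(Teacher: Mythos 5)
Your argument has a genuine gap at exactly the step you flag and then dismiss. Rigidity, as defined in the paper, means the given immersion is unique up to isometries of $\mathbb E^4$ among \emph{all} isometric immersions of the Riemannian manifold $M$ --- not merely among ideal ones. A competing isometric immersion $g:M\to\mathbb E^4$ need not be ideal, so Theorem~\ref{T:2.1} does not force its shape operator into the form $\mathrm{diag}(\lambda,\mu,\lambda+\mu)$ in any frame, and your intrinsic recovery of $\lambda,\mu$ and of the eigendistributions (via the coordinate-plane sectional curvatures and the Ricci operator) says nothing about $g$. What you actually prove is that any two \emph{ideal} immersions of $M$ with three distinct principal curvatures are congruent; to obtain rigidity you would have to show that \emph{any} symmetric bilinear form satisfying the Gauss equation for the given curvature tensor coincides with $\pm h$, with no normal form assumed in advance. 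Your sentence ``that cannot happen here, because we are only asked about immersions of a manifold that is ideal'' is precisely where the argument breaks.

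The paper closes this gap with the classical type-number argument: since $\lambda$, $\mu$, $\lambda+\mu$ are mutually distinct, both $\lambda$ and $\mu$ are nonzero, and non-minimality gives $\lambda+\mu\neq 0$; hence the shape operator has rank (type number) $3$ at every point, and the Beez--Killing rigidity theorem (\cite[p.~46]{K}) applies. That theorem contains exactly the algebraic lemma you are missing --- a solution of the Gauss equation of rank $\geq 3$ is determined up to sign among \emph{all} symmetric solutions --- after which rigidity is immediate. Your computations of $K_{12}=\lambda\mu$, $K_{13}=\lambda(\lambda+\mu)$, $K_{23}=\mu(\lambda+\mu)$ and of the Ricci eigenvalues are correct but unnecessary; replacing your second paragraph by the observation that all three principal curvatures are nonzero, together with a citation of the type-number theorem, completes (and considerably shortens) the proof.
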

\begin{proof} Assume that $M$ is a non-minimal ideal hypersurface with three distinct principal curvatures. Then it follows from Theorem \ref{T:2.1} that the three principal curvatures are $\lambda,\mu,\lambda+\mu$ for some functions $\lambda$ and $\mu$ satisfying $\lambda+\mu\ne 0$.  

Since $\lambda,\mu,\lambda+\mu$ are mutually distinct, both principal curvatures $\lambda$ and $\mu$ are nonzero. Therefore, all of the three principal curvatures must be nonzero. Hence, $M$ has type number three. Consequently, the ideal hypersurface $M$ must be rigid (cf. for instance, \cite[page 46]{K}).
\end{proof}

In view of Theorem \ref{T:4.2} and Proposition \ref{P:5.1}, we provide the following explicit examples which illustrate that minimal ideal hypersurface with three distinct principal curvatures  in $\mathbb E^4$ are not rigid in general.

\begin{example} \label{E:5.1} {\rm Let $M_1$ be the catenoid in a Euclidean 3-space $\mathbb E^3$ defined by 
\begin{align}\label{5.1} \psi_1(s,t)=\big(\! \cosh s \cos t,\cosh s\sin t,s\big)\end{align}
for $-\sinh^{-1} (1)< s<\sinh^{-1} (1)$ and $0<t<2\pi$. 
Let $M_2$ be the helicoid given by
\begin{align}\label{5.2} \psi_2(u,v)=\big(u\cos v,u \sin v,v\big)\end{align}
for $-1<u<1$ and $0<v<2\pi$. It is well-known that both the catenoid and the helicoid are minimal in $\mathbb E^3$.

Consider the map $\phi: M_1\to M_2$ defined by
\begin{align} \phi\big(\big(\! \cosh s \cos t,\cosh s\sin t,s\big)\big)= \big(\! \sinh s\cos t,\sinh s \sin t,t \big).\end{align}
It is direct to show that $\phi$ is a one-to-one isometry (cf. \cite[pages 146-147]{M}). Thus, $\psi_1$ and $\phi\circ \psi_1$ are two non-congruent isometric immersions of a Riemannian 2-manifold, say $N$, into the Euclidean 3-space $\mathbb E^3$.

If we put \begin{align} &L_1:N\times {\bf R}\to \mathbb E^4; (s,t,x)\mapsto (\cosh s \cos t,\cosh s\sin t,s,x),
\\& L_2:N\times {\bf R}\to \mathbb E^4; (s,t,x)\mapsto (\sinh s\cos t,\sinh s \sin t,t,x)\end{align}
 Then $L_1$ and $L_2$ are two non-congruent ideal immersions of the Riemannian 3-manifold $N\times {\bf R}$ into $\mathbb E^4$.
Clearly, both $L_1$ and $L_2$ have three distinct principal curvatures. 
}\end{example}

The following result is an immediate consequence of Example \ref{E:5.1}.

\begin{proposition} \label{P:5.2} There exist minimal ideal hypersurfaces in $\mathbb E^4$ with three distinct principal curvatures which are non-rigid. \end{proposition}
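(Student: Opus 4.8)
The plan is to prove Proposition \ref{P:5.2} simply by invoking Example \ref{E:5.1}, so the real content is constructing and verifying that example. First I would recall the classical fact that the catenoid $M_1 \subset \mathbb E^3$ and the helicoid $M_2 \subset \mathbb E^3$ are locally isometric minimal surfaces: the map $\phi$ sending the point with parameters $(s,t)$ on the catenoid to the point with parameters $(t,s)$ (suitably interpreted) on the helicoid is an isometry of the induced metrics, yet $M_1$ and $M_2$ are not congruent in $\mathbb E^3$ — one is a surface of revolution and the other is ruled, and more invariantly their principal curvatures (equivalently their second fundamental forms) are genuinely different pointwise, so no rigid motion of $\mathbb E^3$ carries one to the other. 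This gives an abstract Riemannian surface $N$ admitting two non-congruent isometric minimal immersions $\psi_1, \phi\circ\psi_1$ into $\mathbb E^3$.

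Next I would take the Riemannian product $M = N \times \mathbf R$ and the two product immersions $L_1 = \psi_1 \times \mathrm{id}$ and $L_2 = (\phi\circ\psi_1)\times \mathrm{id}$ into $\mathbb E^3 \times \mathbf R = \mathbb E^4$. Each $L_i$ is an isometric immersion of the 3-manifold $M$, and its shape operator (with respect to the unit normal, which lies in the $\mathbb E^3$ factor) is the shape operator of the corresponding surface immersion on the $N$-directions and zero on the $\mathbf R$-direction. So if $\psi_1$ has principal curvatures $\kappa$ and $-\kappa$ at a point (minimality in $\mathbb E^3$), then $L_1$ has principal curvatures $\kappa, -\kappa, 0$; since a nonzero minimal surface in $\mathbb E^3$ has $\kappa \neq 0$ away from isolated umbilic/flat points, these three numbers are distinct. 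The crucial point is that the shape operator has the form diag$(\lambda,\mu,\lambda+\mu)$ required by Theorem \ref{T:2.1} for the equality case of \eqref{2.10}: indeed with $\lambda = \kappa$, $\mu = -\kappa$ we get $\lambda+\mu = 0$, matching the zero eigenvalue in the $\mathbf R$-direction. Hence both $L_1$ and $L_2$ are $\delta(2)$-ideal, i.e.\ ideal hypersurfaces, with three distinct principal curvatures.

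Finally I would argue non-rigidity: $L_1$ and $L_2$ restrict on each slice $N\times\{x\}$ to $\psi_1$ and $\phi\circ\psi_1$, which are non-congruent in $\mathbb E^3$; therefore no isometry of $\mathbb E^4$ can carry $L_1$ to $L_2$, for such an isometry would have to preserve the splitting (the $\mathbf R$-factor is the kernel of the shape operator, hence intrinsically and extrinsically distinguished) and would induce a congruence of $\psi_1$ with $\phi\circ\psi_1$ in the $\mathbb E^3$ factor, a contradiction. This shows the ideal immersion of $M$ is not unique up to ambient isometries, completing the proof. The step requiring the most care is the last one — making precise that an $\mathbb E^4$-isometry matching $L_1$ and $L_2$ forces a congruence of the catenoid and helicoid — since one must rule out the isometry mixing the $\mathbf R$-direction with the $\mathbb E^3$-directions; this is handled by observing that the relative nullity distribution (the kernel of the second fundamental form) is exactly the $\mathbf R$-direction at every point and is preserved by any congruence of the hypersurfaces.

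\begin{proof}[Proof of Proposition \ref{P:5.2}] This is immediate from Example \ref{E:5.1}: the immersions $L_1$ and $L_2$ constructed there are two non-congruent ideal hypersurfaces of $\mathbb E^4$ with three distinct principal curvatures, both of which are minimal since the catenoid and the helicoid are minimal in $\mathbb E^3$ and the extra factor contributes a zero principal curvature. Hence the ideal immersion of the underlying Riemannian 3-manifold $N\times\mathbf R$ is not rigid. \end{proof}
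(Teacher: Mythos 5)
Your proposal is correct and follows the paper's own route exactly: the paper proves Proposition \ref{P:5.2} simply by pointing to Example \ref{E:5.1} (the catenoid and helicoid crossed with a line), which is precisely what you do. Your additional verification that the shape operator $\mathrm{diag}(\kappa,-\kappa,0)$ fits the ideal form $\mathrm{diag}(\lambda,\mu,\lambda+\mu)$, and your relative-nullity argument for why no ambient isometry of $\mathbb E^4$ can identify $L_1$ with $L_2$, supply details the paper leaves implicit, and both are sound.
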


Finally, we give the following non-rigidity result.

\begin{proposition} \label{P:5.3} For any integer $n\geq 3$, there exist  ideal hypersurfaces in a Euclidean space $\mathbb E^{n+1}$ which are not rigid. \end{proposition}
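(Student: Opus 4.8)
The plan is to generalize Example \ref{E:5.1} from $n=3$ to arbitrary $n\geq 3$ by taking a Riemannian product of a non-rigid surface in $\mathbb E^3$ with a flat factor $\mathbb E^{n-2}$. First I would recall that the catenoid $M_1$ and the helicoid $M_2$ in $\mathbb E^3$ are isometric via the map $\phi$ of Example \ref{E:5.1}, so there is a Riemannian surface $N$ admitting two non-congruent minimal isometric immersions $\psi_1,\psi_2=\phi\circ\psi_1$ into $\mathbb E^3$. Now set $M=N\times\mathbb E^{n-2}$ and define
\begin{align}\notag
&L_1:N\times\mathbb E^{n-2}\to\mathbb E^{n+1};\quad (p,x_1,\ldots,x_{n-2})\mapsto(\psi_1(p),x_1,\ldots,x_{n-2}),\\\notag
&L_2:N\times\mathbb E^{n-2}\to\mathbb E^{n+1};\quad (p,x_1,\ldots,x_{n-2})\mapsto(\psi_2(p),x_1,\ldots,x_{n-2}).
\end{align}
These are isometric immersions of the same Riemannian $n$-manifold $M$, and since $\psi_1,\psi_2$ are non-congruent in $\mathbb E^3$ (the catenoid is not congruent to a helicoid), $L_1$ and $L_2$ are non-congruent in $\mathbb E^{n+1}$; hence neither is rigid.

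The key step is to verify that $L_1$ (and likewise $L_2$) is a $\delta(2)$-ideal hypersurface, i.e.\ satisfies the equality case of \e{2.10} with $\epsilon=0$, which by Theorem \ref{T:2.1} amounts to exhibiting an orthonormal frame in which the shape operator is $\mathrm{diag}(\lambda,\mu,\lambda+\mu)$ for an $n$-manifold — more precisely, for general $n$ the ideal condition requires (by Theorem A with $k=1$, $n_1=2$) the shape operator to take the block form \e{2.9}, namely $\mathrm{diag}(\lambda,\mu,c,\ldots,c)$ with $\lambda+\mu = $ the repeated eigenvalue $c$ and equal to the remaining trace conditions. For $L_1$, the unit normal at $(\psi_1(p),x)$ is $(\nu(p),0,\ldots,0)$, where $\nu$ is the Gauss map of the minimal surface $\psi_1$; the shape operator splits as the shape operator of $\psi_1$ on the $N$-factor and zero on the $\mathbb E^{n-2}$-factor. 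Since $\psi_1$ is minimal in $\mathbb E^3$, its principal curvatures are $\lambda,-\lambda$ for some function $\lambda$, so the shape operator of $L_1$ in a suitable frame is $\mathrm{diag}(\lambda,-\lambda,0,0,\ldots,0)$. I would then check this matches the required ideal form with the "$\lambda$, $\mu$, rest-equal-and-sum-to-trace" pattern: take the distinguished pair to consist of one $N$-direction and one flat direction, so $(\text{eigenvalue }\lambda)$, $(\text{eigenvalue }0)$, and the remaining eigenvalues $-\lambda,0,\ldots,0$ must all equal $\lambda+0=\lambda$ — which fails. So instead the correct grouping is: the $n_1=2$ block carries the eigenvalues $\lambda,-\lambda$ with trace $0$, and the identity block $\mu_r I$ carries the eigenvalue $0$ with multiplicity $n-2$, and $\mu_r=0$ equals $\mathrm{trace}(A^r_1)=0$; this is exactly the equality case \e{2.9}. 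Hence $L_1$ is $\delta(2)$-ideal, i.e.\ an ideal hypersurface.

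I would then note that both $L_1$ and $L_2$ have the (distinct, in an open dense set) principal curvatures $\lambda,-\lambda,0$ when $n=3$, and for $n>3$ the principal curvatures are $\lambda,-\lambda,0$ with the last of multiplicity $n-2$; in all cases these are genuine minimal ideal hypersurfaces, and the two immersions are non-congruent because the catenoid and helicoid are non-congruent as surfaces in $\mathbb E^3$ (for instance, the catenoid is a surface of revolution with a closed profile curve while the helicoid is ruled and not, and they differ in total curvature behavior along closed curves). Therefore $\mathbb E^{n+1}$ admits non-rigid ideal hypersurfaces for every $n\geq 3$, proving Proposition \ref{P:5.3}.

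\textbf{Main obstacle.} The only real subtlety is bookkeeping in the higher-dimensional ideal condition: one must be careful that for $n>3$ the equality case of the general inequality \e{2.8} (not just the $n=3$ form \e{2.10}) is the relevant one, and check that the block decomposition \e{2.9} is realized with the flat $\mathbb E^{n-2}$-directions forming the $\mu_r I$ block and the two minimal-surface directions forming the trace-zero $2\times2$ block. Once the shape operator is identified as $\mathrm{diag}(\lambda,-\lambda,0,\ldots,0)$, matching it to \e{2.9} is immediate; and the non-congruence of $L_1,L_2$ reduces transparently to the classical non-congruence of the catenoid and the helicoid, which is well known.
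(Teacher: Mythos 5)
Your construction is exactly the paper's proof: the paper's Proposition \ref{P:5.3} is proved by taking the same two immersions $L_1,L_2$ of $N\times\mathbb E^{n-2}$ into $\mathbb E^{n+1}$ built from the catenoid--helicoid isometry of Example \ref{E:5.1}. Your additional verification that the shape operator $\mathrm{diag}(\lambda,-\lambda,0,\ldots,0)$ realizes the equality block form \e{2.9} (with the trace-zero $2\times2$ block and $\mu_r=0$) is correct and merely makes explicit what the paper leaves implicit.
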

\begin{proof} The simplest examples of such ideal hypersurfaces in ${\mathbb E}^{n+1}$ are the following two isometric immersions of $M=N\times {\mathbb E}^{n-2}$ into $\mathbb E^{n+1}$:
\begin{align} &L_1:N\times {\mathbb E}^{n-2} \ni (s,t,{\bf x})\mapsto (\cosh s \cos t,\cosh s\sin t,s,{\bf x})\in {\mathbb E}^{n+1},
\\& L_2:N\times {\mathbb E}^{n-2}\ni (s,t,{\bf x})\mapsto (\sinh s\cos t,\sinh s \sin t,t,{\bf x})\in {\mathbb E}^{n+1},\end{align}
where $N$ is defined in Example \ref{E:5.1}.
\end{proof}

An immediate consquence of Proposition \ref{P:5.3} is the following.

\begin{corollary} For each integer $n\geq 3$, there exist Riemannian $n$-manifolds which admit more than one ideal immersions in $\mathbb E^{n+1}$.
\end{corollary}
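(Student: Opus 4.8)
The plan is to verify directly that the two families $L_1$ and $L_2$ are indeed ideal hypersurfaces of $\mathbb E^{n+1}$ with the same induced metric but are not congruent, so that neither can be rigid. The key observation is that each map is a product immersion $\phi_i \times \mathrm{id}_{\mathbb E^{n-2}}$, where $\phi_1$ is the catenoid and $\phi_2\circ\phi_1^{-1}\circ\phi_1$ is the helicoid, composed through the classical local isometry $\phi$ of Example \ref{E:5.1}. Since that isometry identifies the intrinsic metrics of $M_1$ and $M_2$, the Riemannian product metric pulled back by $L_1$ and that pulled back by $L_2$ coincide on $N\times\mathbb E^{n-2}$; thus $L_1$ and $L_2$ are two isometric immersions of one and the same Riemannian manifold $M=N\times\mathbb E^{n-2}$.

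Next I would check that both immersions are $\delta(2)$-ideal in the sense of Theorem A (equivalently, that for hypersurfaces the equality case of \eqref{2.8} holds with $k=1$, $n_1=2$). For a product immersion of a minimal surface in $\mathbb E^3$ with a flat $\mathbb E^{n-2}$, the shape operator in the obvious orthonormal frame is block diagonal with a $2\times 2$ block of trace zero (because the surface factor is minimal) and an $(n-2)\times(n-2)$ zero block. That is precisely the form \eqref{2.9} with $k=1$, $A^{n+1}_1$ a trace-zero $2\times 2$ matrix and $\mu_{n+1}=0$; so both $L_1$ and $L_2$ satisfy the equality case identically and are ideal hypersurfaces. (For $n=3$ this specializes to Example \ref{E:5.1}, which was already recorded.)

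Then I would argue non-congruence: if $L_1$ and $L_2$ differed by an isometry of $\mathbb E^{n+1}$, then restricting to the surface factor (the $\mathbb E^{n-2}$ directions are totally geodesic flat factors and are carried to parallel $(n-2)$-planes) would force the catenoid $\phi_1$ and the helicoid $\phi_2$ to be congruent in $\mathbb E^3$, which is false — they have, for instance, non-isometric Gauss maps / different second fundamental forms relative to the shared metric, as is classical (cf. \cite[pages 146--147]{M}). Hence $L_1\not\equiv L_2$ up to $\mathbb E^{n+1}$-isometries, so $M=N\times\mathbb E^{n-2}$ admits two noncongruent ideal immersions into $\mathbb E^{n+1}$, proving the proposition.

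The main obstacle is the non-congruence step: one must make sure that an ambient isometry of $\mathbb E^{n+1}$ really does descend to an ambient isometry of the $\mathbb E^3$ containing the catenoid and the helicoid, i.e.\ that the product structure is respected. This follows because the $\mathbb E^{n-2}$-directions are exactly the kernel of the shape operator (the nullity of the second fundamental form), which is a frame-independent, isometry-invariant distribution; an ambient isometry must therefore permute these directions with their orthogonal complement, and restricting to a leaf of the complement yields the desired congruence of the two minimal surfaces in $\mathbb E^3$. Once that reduction is in place, the remaining verifications (that the metrics agree and that the shape operators have the ideal block form) are routine computations with \eqref{5.1} and \eqref{5.2}.
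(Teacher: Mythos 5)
Your proposal is correct and follows the same route as the paper: the Corollary is obtained from exactly this catenoid/helicoid product construction $L_1,L_2:N\times\mathbb E^{n-2}\to\mathbb E^{n+1}$ (Example \ref{E:5.1} and Proposition \ref{P:5.3}). The only difference is that you spell out the verifications the paper leaves to the reader — the block form \eqref{2.9} of the shape operator giving ideality, and the reduction of non-congruence to the classical non-congruence of the catenoid and helicoid via the relative nullity distribution — and these details are all sound.
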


\end{document}